\newtheorem{lemma}{Lemma}[section]
\newtheorem{proposition}[lemma]{Proposition}
\renewenvironment{proof}[1][\proofname]{{\noindent\bf #1. }}{\qed}
\newtheorem{theoremletters}{Theorem}
\newtheorem{corollaryletters}[theoremletters]{Corollary}
\newcommand{\abs}[1]{\ensuremath{|#1|}}
\newcommand{\op}{\operatorname}
\newcommand{\ce}[2]{\pmb{\op{C}}_{#1}(#2)}
\newcommand{\ze}[1]{\pmb{\op{Z}}(#1)}
\newcommand{\rad}[2]{\pmb{\op{O}}_{#1}(#2)}
\newcommand{\syl}[2]{\op{Syl}_{#1}\left(#2\right)}
\newcommand{\hall}[2]{\op{Hall}_{#1}\left(#2\right)}
\begin{document}

\title{\bf Finite groups whose prime graph on class sizes is a block square}

\author{\sc V. Sotomayor\thanks{Departamento de Matemáticas y Estadística, Centro Universitario EDEM, Muelle de la Aduana s/n, La Marina de Valencia, 46024 Valencia, Spain.\newline
\Letter: \texttt{vsotomayor@edem.es} \newline \rule{6cm}{0.1mm}\newline
This research is supported by Proyecto PGC2018-096872-B-I00 from the Ministerio de Ciencia, Innovación y Universidades (Spain), and Proyecto AICO/2020/298 from the Generalitat Valenciana (Spain). \newline
}}

\date{}

\maketitle

\begin{abstract}
\noindent Let $G$ be a finite group, and let $\Delta(G)$ be the prime graph built on its set of conjugacy class sizes: this is the (simple undirected) graph whose vertices are the prime numbers dividing some conjugacy class size of $G$, and two distinct vertices $p, q$ are adjacent if and only if $pq$ divides some class size of $G$. In this paper, we characterise the structure of those groups $G$ whose prime graph $\Delta(G)$ is a \emph{block square}.

\medskip

\noindent \textbf{Keywords} Finite groups $\cdot$ Conjugacy classes $\cdot$ Prime graph

\smallskip

\noindent \textbf{2010 MSC} 20E45 
\end{abstract}


\section{Introduction}

Throughout this paper, all groups considered are finite. Within finite group theory, the influence of the arithmetical properties of the conjugacy class sizes of a group on its algebraic structure is a research area that has attracted the interest of several authors over the last decades. The \emph{prime graph} built on the set of class sizes of a group $G$, which we denote by $\Delta(G)$, is a useful tool that is gaining an increasing interest for analysing the arithmetical properties of this set. This (simple undirected) graph has as vertex set $V(G)$ the prime divisors of the conjugacy class sizes of $G$, and its edge set $E(G)$ contains pairs $\{p,q\}\subseteq V(G)$ such that $pq$ divides some class size of $G$. In this framework, two relevant question that arise are: which graphs can occur as $\Delta(G)$ for some finite group $G$, and how is the structure of $G$ affected by the graph-theoretical properties of $\Delta(G)$?

Interestingly, non-adjacency between vertices of $\Delta(G)$ highly restricts the structure of $G$, which suggests that $\Delta(G)$ tends to have ``many'' edges. In fact, the extreme case when $\Delta(G)$ is disconnected happens if and only if $G$ is a $\mathcal{D}$\emph{-group}, that is, $G=AB$ where $A\unlhd G$ and $B$ are abelian subgroups of coprime orders, $\ze{G}\leqslant B$, and the factor group $G/\ze{G}$ is a Frobenius group with kernel $A\ze{G}/\ze{G}$ (see Theorem 4 of \cite{D}). In this situation, $G$ has three class sizes, which are $\{1, \abs{A}, \abs{B/\ze{G}}\}$. So the vertex sets of the (two) connected components of $\Delta(G)$ turn out to be the sets of prime divisors of the orders of $A$ and $B/\ze{G}$, respectively, and both sets are \emph{cliques} (i.e. they induce complete subgraphs) of $\Delta(G)$.

In \cite{few}, C. Casolo \emph{et al.} studied the structure of those finite groups $G$ such that $\Delta(G)$ has no complete vertices. Moreover, they characterised those groups whose prime graph on class sizes is non-complete and regular, and they are basically direct products of certain $\mathcal{D}$-groups. In particular, if $\Delta(G)$ is a square with $V(G)=\{p,q,r,s\}$ and $E(G)=\{\{p,r\},\{p,s\},\{q,r\},\{q,s\}\}$, then from their result it follows that (up to abelian direct factors) $G=A\times B$ where $A$ and $B$ are $\mathcal{D}$-groups of orders divisible by $\{p,q\}$ and $\{r,s\}$, respectively.

A natural way to generalise a square graph is to replace each vertex by a set of vertices. In this spirit, a graph is called a \emph{block square} if its vertex set can be written as a union of four disjoint, non-empty subsets $\pi_1, \pi_2, \pi_3, \pi_4$, where no prime in $\pi_1$ is adjacent to any prime in $\pi_4$ and no prime in $\pi_2$ is adjacent to any prime in $\pi_3$, and there exist vertices in both $\pi_1$ and $\pi_4$ that are adjacent to vertices in $\pi_2$ and in $\pi_3$. Certainly, any direct product $G=A\times B$ of two coprime $\mathcal{D}$-groups yields a block square $\Delta(G)$. So the question that naturally arises is whether there exist other types of groups whose prime graph on class sizes is a block square. The main result of this paper shows that in fact this is the unique way of obtaining groups with such class-size prime graph.

\begin{theoremletters}
\label{teoA}
Let $G$ be a finite group. Then $\Delta(G)$ is a block square if and only if, up to an abelian direct factor, $G=A\times B$ where $A$ and $B$ are $\mathcal{D}$-groups of coprime orders.
\end{theoremletters}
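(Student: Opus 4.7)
My plan is to prove Theorem~\ref{teoA} by addressing the two implications separately. The forward direction reduces to a direct computation from the structure of $\mathcal{D}$-groups, while the reverse requires producing a structural decomposition from the arithmetical data.

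\textbf{Forward direction.} I will take $G = A \times B$, possibly multiplied by an abelian direct factor that contributes only the trivial class size and hence leaves $\Delta(G)$ unchanged. Since $A$ is a $\mathcal{D}$-group, $\Delta(A)$ is disconnected into two cliques supported on prime sets $\pi_1 \sqcup \pi_4$ (the primes dividing the two non-trivial class sizes of $A$), and likewise $\Delta(B)$ splits as $\pi_2 \sqcup \pi_3$; coprimality of $|A|$ and $|B|$ makes $\pi_A := \pi_1 \cup \pi_4$ and $\pi_B := \pi_2 \cup \pi_3$ disjoint. Class sizes of $G$ are products $c_A c_B$, so the non-adjacencies $\pi_1 \not\sim \pi_4$ and $\pi_2 \not\sim \pi_3$ carry over to $\Delta(G)$, while non-trivial class sizes in each factor yield edges between every $\pi_i$ and every $\pi_j$ with $i \in \{1,4\}$, $j \in \{2,3\}$. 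This shows $\Delta(G)$ is a block square.

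\textbf{Reverse direction.} Now assume $\Delta(G)$ is a block square with partition $V(G) = \pi_1 \cup \pi_2 \cup \pi_3 \cup \pi_4$; set $\pi_A = \pi_1 \cup \pi_4$ and $\pi_B = \pi_2 \cup \pi_3$. After reducing to the case that $G$ has no non-trivial abelian direct factor, the non-adjacency $\pi_1 \not\sim \pi_4$ forces, for every $g \in G$, the index $|G:\ce{G}{g}|$ to be coprime to at least one of $\pi_1, \pi_4$ (otherwise primes $p \in \pi_1$ and $q \in \pi_4$ both dividing this index would produce a forbidden edge $\{p,q\}$); an analogous statement holds for $\pi_2$ and $\pi_3$. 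My aim is to upgrade these arithmetical restrictions into a direct decomposition $G = A \times B$ where $A$ is a normal Hall $\pi_A$-subgroup and $B$ a normal Hall $\pi_B$-subgroup. Once this is in place, the induced prime graph $\Delta(A)$ has vertex set $\pi_1 \cup \pi_4$ with no edges between these two parts and is therefore disconnected, so $A$ is a $\mathcal{D}$-group by Theorem~4 of \cite{D}; likewise $B$ is a $\mathcal{D}$-group.

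\textbf{Main obstacle.} The crux is producing the direct product decomposition $G = A \times B$ from the non-adjacency data alone. Since a block square has no complete vertex in $\Delta(G)$ (each vertex in $\pi_i$ is non-adjacent to every vertex of the opposite $\pi_j$), the characterization of groups without complete vertices obtained in \cite{few} is available as a central tool. I plan to combine this characterization with the extra structure supplied by the two simultaneous cuts $\{\pi_1,\pi_4\}$ and $\{\pi_2,\pi_3\}$ to produce the normal Hall subgroups $A$ and $B$, and then to argue element-wise commutation between them via Sylow-theoretic and centralizer comparisons exploiting the arithmetic restrictions above. A related technicality is ensuring that no prime outside $\pi_A \cup \pi_B$ divides $|G|$ beyond the already stripped abelian factor, which I expect to handle by noting that any such prime would correspond to a vertex absent from $V(G)$ and hence can only contribute centrally.
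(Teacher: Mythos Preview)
Your overall strategy coincides with the paper's: the forward direction is routine, and for the reverse you correctly plan to strip abelian direct factors, invoke the ``no complete vertex'' structure theorem of \cite{few} (Proposition~\ref{reduction} here), and then extract a direct decomposition $G=A\times B$ into Hall $\pi_A$- and $\pi_B$-subgroups, after which the $\mathcal{D}$-group conclusion follows from Dolfi's theorem. Your target decomposition is exactly the one the paper obtains. However, the section you label ``Main obstacle'' is where essentially all the work lies, and your proposal does not say how to carry it out beyond naming the intended tools.

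Concretely, Proposition~\ref{reduction} yields $G=KL$ with $K=G'$ abelian normal and $L$ abelian of coprime order, and since $\pi(K)$ and $\pi(L)$ are cliques one gets (after relabelling) $\pi(K)=\pi_1\cup\pi_3$ and $\pi(L)=\pi_2\cup\pi_4$, so $K=H_1\times H_3$ and $L=H_2\times H_4$ for abelian Hall $\pi_i$-subgroups $H_i$. To reach $G=(H_1H_4)\times(H_3H_2)$ one must prove $[H_1,H_2]=1$ and $[H_3,H_4]=1$, and this is not a routine ``Sylow-theoretic and centralizer comparison''. The paper devotes two substantial steps to it, and the decisive ingredient is Proposition~\ref{key} (Proposition~3.1 of \cite{incomplete}), which places $\rad{q}{G}$ inside $\ze{\ce{G}{M}}$ once one knows $\ce{M}{P}=1$ for a suitable complemented normal abelian $r$-subgroup $M$. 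The paper first establishes auxiliary non-commutation facts (its Step~1) precisely in order to verify such fixed-point-free hypotheses, and then applies Proposition~\ref{key} to force the required centralisations (its Step~2). Your plan neither invokes this result nor offers a substitute mechanism, so as written it is an accurate roadmap of what must be shown rather than a proof.
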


As a consequence, we have attained a characterisation of the block square graphs that can occur as $\Delta(G)$ for some finite group $G$.

\begin{corollaryletters}
\label{corB}
Let $\Delta$ be a block square graph. Then there exists a finite group $G$ such that $\Delta(G)=\Delta$ if and only if $\pi_i$ is a clique for each $1\leq i \leq 4$, and all the primes in $\pi_1 \cup \pi_4$ are adjacent to all the primes in $\pi_2\cup \pi_3$.
\end{corollaryletters}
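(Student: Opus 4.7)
My plan is to derive both directions of Corollary B from Theorem A combined with an explicit description of $\Delta(A\times B)$ for coprime $\mathcal{D}$-groups $A,B$, and an explicit construction of $\mathcal{D}$-groups realising any prescribed disjoint pair of non-empty prime sets.

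For the necessity, suppose $\Delta(G)$ is a block square with partition $\pi_1\cup \pi_2\cup \pi_3\cup \pi_4$. I will apply Theorem A to write $G=A\times B$ (up to an abelian direct factor, which contributes no vertices to $\Delta(G)$), where $A,B$ are $\mathcal{D}$-groups of coprime orders. Since a $\mathcal{D}$-group $H=H_1 K$ has exactly the three class sizes $1,|H_1|,|K/\ze{H}|$, its prime graph splits as two cliques, one on $\pi(H_1)$ and one on $\pi(K/\ze{H})$. As $|A|$ and $|B|$ are coprime, every class size of $G$ is a product of one class size of $A$ with one of $B$, and I will check that among the nine possible products every prime in $\pi(A)$ becomes adjacent to every prime in $\pi(B)$, while the two non-adjacent components of $\Delta(A)$ and of $\Delta(B)$ survive in $\Delta(G)$. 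The block-square non-adjacency pattern then forces (up to a relabelling) $\{\pi_1,\pi_4\}$ to coincide with the two components of $\Delta(A)$ and $\{\pi_2,\pi_3\}$ with those of $\Delta(B)$, which yields exactly the claimed clique and complete-bipartite conditions.

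For the sufficiency, given $\Delta$ satisfying the stated conditions, it will suffice to construct a $\mathcal{D}$-group $A$ whose class-size prime graph is the disjoint union of cliques on $\pi_1$ and on $\pi_4$, and likewise a $\mathcal{D}$-group $B$ for $\{\pi_2,\pi_3\}$; since the $\pi_i$ are pairwise disjoint, $A$ and $B$ will have coprime orders, and the forward analysis applied to $G=A\times B$ will give $\Delta(G)=\Delta$. To build $A$ I will take a cyclic group $C$ of order $n=\prod_{q\in \pi_4} q$, choose $k_p$ with $n\mid p^{k_p}-1$ for each $p\in \pi_1$, and form $A_1=\bigoplus_{p\in \pi_1}\mathbb{F}_{p^{k_p}}^{+}$ as a $C$-module via multiplication by a primitive $n$-th root of unity in each summand. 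A routine check shows that $C$ acts fixed-point-freely on $A_1\setminus\{0\}$, so $A:=A_1\rtimes C$ is a Frobenius group with abelian kernel $A_1$ and cyclic (abelian) complement $C$, hence a $\mathcal{D}$-group with $\ze{A}=1$, $\pi(A_1)=\pi_1$ and $\pi(C)=\pi_4$. The same recipe produces $B$.

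The only substantive step is this Frobenius construction, which realises any prescribed disjoint pair of non-empty prime sets as the components of the class-size prime graph of a $\mathcal{D}$-group; the rest reduces to straightforward bookkeeping of class sizes in a direct product of coprime $\mathcal{D}$-groups combined with Theorem A.
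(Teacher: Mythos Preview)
Your proposal is correct and follows the same overall strategy as the paper: necessity via Theorem~A and a routine description of class sizes in a direct product of coprime $\mathcal{D}$-groups, and sufficiency by an explicit Frobenius construction realising the four prescribed prime sets.

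The one genuine difference lies in the sufficiency construction. The paper fixes the \emph{sizes} $m_i=|\pi_i|$, takes an arbitrary squarefree $n_1$ with $m_1$ prime factors, and then invokes Dirichlet's theorem on primes in arithmetic progressions to find $m_4$ further primes whose product $n_4$ satisfies $n_4\equiv 1\pmod{n_1}$, so that a cyclic group of order $n_4$ acts Frobeniusly on a cyclic group of order $n_1$; the second factor $B$ is built the same way. Your construction instead keeps the prescribed primes themselves: you set $n=\prod_{q\in\pi_4}q$, choose exponents $k_p$ with $n\mid p^{k_p}-1$ (possible since $\gcd(p,n)=1$), and let the cyclic group of order $n$ act by scalar multiplication on $\bigoplus_{p\in\pi_1}\mathbb{F}_{p^{k_p}}^{+}$. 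This avoids Dirichlet's theorem entirely and yields a group whose class-size prime graph has \emph{exactly} the given primes as vertices, not merely a graph of the same isomorphism type. So your route is marginally more elementary and slightly stronger; the paper's route has the aesthetic advantage that both kernel and complement are cyclic. Either is perfectly adequate for the corollary.
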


Frequently, the results on the class-size context have a dual version in the context of degrees of irreducible characters. It is worth mentioning that M.L. Lewis and Q. Meng introduced in \cite{LM} the concept of block square graphs, and in that paper they carried out an analysis of block squares for the prime graph built on the character degrees of \emph{soluble} groups. Among other things, they proved an analogous version of Theorem \ref{teoA} for the character-degree prime graph in the particular case that the group possesses two normal non-abelian Sylow subgroups.


\section{Preliminaries}

In the sequel, if $x$ is an element of a group $G$, then we denote by $x^G$ the conjugacy class of $x$ in $G$, and its size is $\abs{x^G}=\abs{G:\ce{G}{x}}$. For a positive integer $n$, we write $\pi(n)$ for the set of prime divisors of $n$, and in particular $\pi(G)$ is the set of prime divisors of $\abs{G}$. As usual, given a prime $p$, the set of all Sylow $p$-subgroups of $G$ is denoted by $\syl{p}{G}$, and $\hall{\pi}{G}$ is the set of all Hall $\pi$-subgroups of $G$ for a set of primes $\pi$. The remaining notation and terminology used is standard in the framework of finite group theory.

The following elementary properties will be used without further reference.

\begin{lemma}
Let $G$ be a group. Then the following conclusions hold.
\begin{itemize}
\item[\emph{(a)}] If either $x,y\in G$ have coprime orders and they commute, or $x\in M$ and $y \in N$ with $M$ and $N$ normal subgroups of $G$ such that $M\cap N=1$, then $\pi(\abs{x^G})\cup\pi(\abs{y^G})\subseteq\pi(\abs{(xy)^G})$.
\item[\emph{(b)}] A given prime $p$ does not lie in $V(G)$ if and only if $G$ has a central Sylow $p$-subgroup.
\end{itemize}
\end{lemma}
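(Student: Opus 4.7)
My plan is to handle each part through a direct centralizer argument. The common engine is that whenever $\ce{G}{xy} \leq \ce{G}{x}\cap\ce{G}{y}$, one immediately obtains divisibility of the corresponding class sizes by passing to indices.

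For part \emph{(a)}, in both scenarios I would first establish the equality $\ce{G}{xy}=\ce{G}{x}\cap\ce{G}{y}$, the $\supseteq$ inclusion being trivial. In the coprime-order case, $x$ and $y$ already commute by hypothesis and, since $\gcd(o(x),o(y))=1$, B\'ezout's identity realises both $x$ and $y$ as powers of $xy$, so any element centralising $xy$ must centralise each factor. In the normal-subgroup case, first note that $[x,y]\in[M,N]\leq M\cap N=1$, so $x$ and $y$ commute; then for $g\in\ce{G}{xy}$ the equation $x^{g}y^{g}=xy$ rewrites as $x^{-1}x^{g}=y(y^{g})^{-1}$, whose left-hand side lies in $M$ and right-hand side in $N$, forcing both to be $1$. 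Once the centraliser equality is in hand, the inclusions $\ce{G}{xy}\leq \ce{G}{x}$ and $\ce{G}{xy}\leq \ce{G}{y}$ show that both $|x^{G}|$ and $|y^{G}|$ divide $|(xy)^{G}|$, giving the required inclusion of prime sets.

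For part \emph{(b)}, the $(\Leftarrow)$ direction is immediate: a central Sylow $p$-subgroup $P$ sits inside every $\ce{G}{x}$, so each $|x^{G}|$ divides $[G:P]$, a $p'$-number, hence $p\notin V(G)$. For $(\Rightarrow)$, I would fix $P\in\syl{p}{G}$ and let $P$ act on $G$ by conjugation. For any $y\in G$ the orbit length $|y^{P}|=[P:P\cap\ce{G}{y}]$ equals $[P\ce{G}{y}:\ce{G}{y}]$ by the second isomorphism theorem, which divides $[G:\ce{G}{y}]=|y^{G}|$. Since $|y^{P}|$ is simultaneously a $p$-power and (by hypothesis) a divisor of a $p'$-number, necessarily $|y^{P}|=1$, so $P\leq\ce{G}{y}$. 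As $y$ was arbitrary, $P\leq\bigcap_{y\in G}\ce{G}{y}=\ze{G}$.

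No serious obstacle is anticipated, since this lemma is declared elementary in the paper. The two least-trivial bookkeeping points are the unique-factorisation argument in the second sub-case of (a) (where both $x,y$ and their conjugates live in disjoint normal subgroups) and the second-isomorphism-theorem computation giving $|y^{P}|\mid|y^{G}|$ in (b); each is dispatched in a couple of lines.
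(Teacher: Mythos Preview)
Your proof is correct. The paper itself does not prove this lemma at all: it is introduced with the phrase ``the following elementary properties will be used without further reference'' and no argument is supplied. Your centraliser computations for both sub-cases of (a) and the orbit-length divisibility argument for (b) are the standard ones and go through without issue.
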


As it was mentioned in the Introduction, non-adjacency between vertices significantly constrains the structure of the group. The next result also illustrates this fact. It is Theorem C of \cite{incomplete}.

\begin{proposition}
\label{prop_edges}
Let $G$ be a group. If $\pi$ is a set of vertices which are all non-adjacent in $\Delta(G)$ to a vertex $p$, then $G$ is $\pi$-soluble with abelian Hall $\pi$-subgroups, and the vertices in $\pi$ are pairwise adjacent.
\end{proposition}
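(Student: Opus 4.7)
The plan is to exploit the non-adjacency hypothesis to cover $G$ by two families of centralisers, and then to leverage this covering for the three conclusions.

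Reformulating the hypothesis: since no $q \in \pi$ is adjacent to $p$, for every $g \in G$ either $p \nmid |g^G|$ or $|g^G|$ is a $\pi'$-number; equivalently, $C_G(g)$ contains either a Sylow $p$-subgroup of $G$ or, for each $q \in \pi$, a Sylow $q$-subgroup of $G$. Since $p \in V(G)$, there exists $x \in G$ with $p \mid |x^G|$, and then $C_G(x) \supseteq Q$ for some $Q \in \syl{q}{G}$ for every $q \in \pi$. This commuting configuration, in which an element whose class size is divisible by $p$ centralises a full Sylow subgroup at every prime of $\pi$, is the lever for the remainder of the argument.

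To obtain abelianness of each such Sylow $q$-subgroup (and, by a subsequent assembly step under $\pi$-solubility, of the whole Hall $\pi$-subgroup), pick $z \in Q$ and, after replacing $x$ by its $p$-part (or analysing separately the case in which $p$ is witnessed only by $p'$-elements, which corresponds to abelian Sylow $p$-subgroups and requires an extra decomposition into $q'$-parts), apply Lemma~2.1(a) to the resulting commuting coprime-order pair: the class size of the product inherits prime divisors from both individual class sizes. Were $|z^G|$ divisible by a prime in $\pi$, the product would have class size divisible simultaneously by $p$ and by a prime of $\pi$, contradicting non-adjacency; this forces $Q$ to be abelian. The same element-product strategy establishes the clique property: if $q, r \in \pi$ were non-adjacent, then $p, q, r$ would be pairwise non-adjacent, and witnesses $u, v \in G$ with $q \mid |u^G|$ and $r \mid |v^G|$ both have centralisers containing a Sylow $p$-subgroup; after conjugation, a common $P \in \syl{p}{G}$ centralises both, and commuting elements of $q$- and $r$-power order extracted from the abelian Sylow subgroups of $C_G(P)$ yield, via Lemma~2.1(a), an element with class size divisible by $qr$, the required contradiction.

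Finally, $\pi$-solubility follows from the covering of $G$ by centralisers of Sylow $p$-subgroups together with centralisers of Sylow $q$-subgroups for $q \in \pi$. The main obstacle is precisely this step: a reduction argument must exclude non-abelian simple composition factors whose order is divisible simultaneously by $p$ and by some prime of $\pi$, which at the level of simple groups appeals to the Classification of Finite Simple Groups, as is invoked in the original proof in \cite{incomplete}. Once such factors are ruled out, a standard reduction along the normal structure of $G$ completes the proof.
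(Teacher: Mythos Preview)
The paper does not supply its own proof of this proposition; it simply records the statement and cites it as Theorem~C of \cite{incomplete}. There is therefore no in-paper argument against which to compare your sketch.

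On the sketch itself: the overall architecture (centraliser covering, abelianness of Sylow $q$-subgroups via class-size divisibility, the clique property via the impossibility of three pairwise non-adjacent vertices, and $\pi$-solubility via CFSG) is in the right spirit, and your acknowledgment that the solubility step ultimately rests on the Classification is accurate. Two technical points, however, are underspecified. In the abelianness step, replacing $x$ by its $p$-part need not preserve $p \mid |x_p^G|$ when the Sylow $p$-subgroups of $G$ are abelian; your parenthetical alternative (``decomposition into $q'$-parts'') then requires the further check that the $q'$-part of $x$ still has class size divisible by $p$, which is not automatic. In the clique step, you assert that commuting $q$- and $r$-power elements with the required class-size divisibilities can be extracted from $C_G(P)$, but you do not justify why the witnesses (or their primary parts) retain divisibility by $q$ and $r$, nor why they can be taken to commute. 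A cleaner route here is to invoke directly the known fact that the complement of $\Delta(G)$ is triangle-free, itself a theorem from the same circle of results in \cite{D} and \cite{incomplete}, rather than an ad~hoc construction.
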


Observe that if $\Delta(G)$ is a block square, then it certainly has no complete vertices. Therefore the result below, which is Theorem C of \cite{few}, yields a reduction on the structure of such a group $G$.

\begin{proposition}
\label{reduction}
Let $G$ be group. Assume that no vertex of $\Delta(G)$ is complete. Then, up to an abelian direct factor, $G=KL$ with $K\unlhd G$ and $L$ abelian subgroups of coprime orders. Moreover, $K=G'$, $K\cap \ze{G}=1$, and both $\pi(K)$ and $\pi(L)$ are cliques of $\Delta(G)$.
\end{proposition}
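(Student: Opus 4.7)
The plan is to first show that $V(G)$ partitions as a disjoint union of two cliques $\sigma,\tau$ of $\Delta(G)$ --- equivalently, that the complement graph $\overline{\Delta(G)}$ is bipartite --- and then to translate this combinatorial partition into the stated group-theoretic decomposition by applying Proposition~\ref{prop_edges}. Since no vertex of $\Delta(G)$ is complete, each $p\in V(G)$ has a non-empty set $\pi_p$ of non-neighbours in $\Delta(G)$, and Proposition~\ref{prop_edges} forces $\pi_p$ to induce a clique. Equivalently, the neighbourhood of every vertex of $\overline{\Delta(G)}$ is an independent set, so $\overline{\Delta(G)}$ is triangle-free.

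I anticipate that upgrading ``triangle-free'' to ``bipartite'' will be the main obstacle: even a $5$-cycle in $\overline{\Delta(G)}$ is combinatorially compatible with the information supplied by Proposition~\ref{prop_edges} alone, so odd cycles must be excluded by class-size arithmetic. To rule out a putative shortest odd cycle $p_1,\dots,p_{2k+1}$ in $\overline{\Delta(G)}$, I would combine the abelian Hall $\pi_{p_i}$-subgroups provided by Proposition~\ref{prop_edges} with part~(a) of the lemma above: suitable products of commuting elements of coprime orders in these Hall subgroups yield class sizes whose prime divisors produce either a chord of the cycle (contradicting its minimality by shortening it to a smaller odd one) or a new triangle in $\overline{\Delta(G)}$.

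Granted the bipartition $V(G)=\sigma\cup\tau$, repeated applications of Proposition~\ref{prop_edges} to vertices of $\sigma$ furnish an abelian Hall $\tau$-subgroup $L$ of $G$; the clique structure of $\sigma$ then implies that the primes of $\abs{x^G}$ lie in $\sigma$ for every non-central $x$ in a Hall $\sigma$-subgroup, and a coprime action/Frattini-type argument shows that a Hall $\sigma$-subgroup $K$ is normal in $G$, giving $G=KL$. Because $G/K\cong L$ is abelian, $G'\leq K$; for the reverse inclusion, the coprime action of $L$ on $K$ yields $K=[K,L]\cdot\ce{K}{L}\leq G'\cdot(K\cap\ze{G})$, and the second factor is trivial because a central $p$-element of $K$ would remove $p$ from $V(G)$, contradicting $p\in\sigma$. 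Finally, the primes in $\pi(G)\setminus V(G)$ correspond by part~(b) of the lemma above to central Sylow subgroups, whose product is an abelian direct factor of $G$; stripping it off yields the claimed form.
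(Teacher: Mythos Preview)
The paper does not prove this proposition: it is quoted verbatim as Theorem~C of \cite{few} and used as a black box. There is therefore no in-paper argument to compare your proposal against.

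On the proposal itself: the overall architecture --- prove that $\overline{\Delta(G)}$ is bipartite, then convert the bipartition $V(G)=\sigma\cup\tau$ into the decomposition $G=KL$ --- is the right shape, and your derivation of ``triangle-free'' from Proposition~\ref{prop_edges} is clean. But the step you yourself flag as the main obstacle, ruling out odd cycles of length $\ge 5$ in $\overline{\Delta(G)}$, is genuinely the heart of the result, and your sketch does not accomplish it. Knowing that each $\pi_{p_i}$ has an abelian Hall subgroup and forming products of commuting coprime-order elements is not by itself enough to force a chord or a triangle from, say, a $5$-cycle; one needs structural control over how those Hall subgroups embed in $G$ (centralisers, normal complements, and so on), which is precisely the work done in \cite{few}. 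Further downstream, Proposition~\ref{prop_edges} only yields an abelian Hall $\pi$-subgroup when all of $\pi$ lies in the non-neighbourhood of a \emph{single} vertex; a bipartition alone does not guarantee a vertex $p\in\sigma$ non-adjacent to every prime of $\tau$, so ``repeated applications'' does not immediately produce an abelian Hall $\tau$-subgroup $L$, nor an abelian $K$. Your argument for $K=G'$ then uses $\ce{K}{L}\le K\cap\ze{G}$, which tacitly assumes $K$ is abelian --- something you have not established. In short, you have correctly identified the target and carried out the first reduction, but the remaining steps constitute the substance of \cite{few} and are not covered by what you have written.
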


We close this section with the next key fact, which is partially Proposition 3.1 of \cite{incomplete}.

\begin{proposition}
\label{key}
Let $G$ be a group, and $p,q$ non-adjacent vertices of $\Delta(G)$. Let $P\in\syl{p}{G}$ and $Q\in\syl{q}{G}$, and let $M$ be a non-trivial abelian normal subgroup of $G$ such that $\abs{M}$ is a power of a suitable prime $r$. Assume that $\ce{M}{P}=1$, and that $M$ has a complement in $G$. Then $\rad{q}{G}=Q\cap \ce{G}{M}\leqslant \ze{\ce{G}{M}}$.
\end{proposition}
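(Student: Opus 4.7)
The target has three ingredients: the two inclusions giving $\rad{q}{G}=Q\cap\ce{G}{M}$, and the centrality $Q\cap\ce{G}{M}\leqslant\ze{\ce{G}{M}}$. Two preliminary observations. First, $r\ne p$: otherwise $M$ would be a non-trivial normal $p$-subgroup contained in $P$, and as a non-trivial normal subgroup of a $p$-group it would meet $\ze{P}$ non-trivially, contradicting $\ce{M}{P}=1$. Second, by Proposition~\ref{prop_edges} applied with $\pi=\{q\}$, the Sylow $q$-subgroup $Q$ is abelian. Now for the easy inclusion $\rad{q}{G}\leqslant Q\cap\ce{G}{M}$: for any $1\ne m\in M$, the hypothesis $\ce{M}{P}=1$ together with $m^G\subseteq M$ forbids any $G$-conjugate of $P$ from lying inside $\ce{G}{m}$, whence $p\mid|m^G|$. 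By the non-adjacency of $p$ and $q$ we obtain $q\nmid|m^G|$, so $\ce{G}{m}$ contains a Sylow $q$-subgroup of $G$, and in particular contains $\rad{q}{G}$. Taking this over all $m\in M$ gives $\rad{q}{G}\leqslant\ce{G}{M}=:C$, and combined with $\rad{q}{G}\leqslant Q$ the inclusion follows.

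For the reverse inclusion and the centrality I exploit the complement $H$ of $M$. Since $M$ is abelian, $M\cap H=1$, and $C_0:=\ce{H}{M}$ centralizes $M$, Dedekind's law yields the internal direct decomposition $C=M\times C_0$; hence $\ze{C}=M\times\ze{C_0}$, and a short check shows that $Q\cap C$ splits as $(Q\cap M)\times(Q\cap C_0)$ with $Q_0:=Q\cap C_0$ a Sylow $q$-subgroup of $C_0$. The entire problem thus reduces to proving $Q_0\leqslant\ze{C_0}$: once this is in place, $Q\cap C\leqslant\ze{C}$ is automatic, so $Q\cap C$ is normal in $C$ and equals $\rad{q}{C}$. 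Since $C\unlhd G$ and $\rad{q}{C}$ is characteristic in $C$, one has $\rad{q}{C}\unlhd G$, so $\rad{q}{C}\leqslant\rad{q}{G}$, yielding the opposite inclusion and the equality $\rad{q}{G}=Q\cap C$.

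The main obstacle is therefore showing $Q_0\leqslant\ze{C_0}$. For fixed $y\in Q_0$ and $c\in C_0$ my plan is to exploit class-size information about the auxiliary elements $ym$, $cm$, and $ycm$ for $1\ne m\in M$: commutation with $m$ is automatic because $y,c\in C$, and after a suitable coprime-order decomposition the class-size inclusion from part~(a) of the elementary lemma at the beginning of Section~2 applies, transporting the facts $p\mid|m^G|$ and the non-adjacency of $p,q$ into centralizer inclusions strong enough to force $[y,c]=1$. The role of the hypothesis that $M$ has a complement is precisely to provide the decomposition $C=M\times C_0$ on which this reduction hinges, and the commutator manipulation in the last sentence is the delicate step that I expect to require the most care.
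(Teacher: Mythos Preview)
The paper does not prove this proposition; it cites it as ``partially Proposition~3.1'' of \cite{incomplete}, so there is no in-paper argument to compare your attempt against.

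Your preliminary observations and the reduction are sound: $r\ne p$, the Sylow $q$-subgroup $Q$ is abelian via Proposition~\ref{prop_edges}, the inclusion $\rad{q}{G}\leqslant Q\cap C$ is correctly argued, the decomposition $C=M\times C_0$ via Dedekind is valid, the splitting $Q\cap C=(Q\cap M)\times Q_0$ holds (one must separate the cases $q=r$ and $q\ne r$, but both go through), and the deduction of the full statement from $Q_0\leqslant\ze{C_0}$ is clean.

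The genuine gap is the final step, which you explicitly leave as a plan rather than a proof. Your intended route --- examining $ym$, $cm$, $ycm$ and pushing class-size divisibility through the coprime-order clause of Lemma~2.1(a) --- meets a concrete obstacle you do not address: when $c\in C_0$ has order divisible by $r$, neither $c$ nor its $r$-part $c_r$ has order coprime to that of a non-trivial $m\in M$, so Lemma~2.1(a) does not deliver $p\mid\abs{(cm)^G}$; and the alternative clause of that lemma (elements lying in two normal subgroups with trivial intersection) is unavailable because $C_0$ is not known to be normal in $G$. Without this, the promised ``centralizer inclusions strong enough to force $[y,c]=1$'' do not materialise from your outline. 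Bridging this requires an additional idea beyond what you sketch --- for instance a covering argument showing that every element of the normal subgroup $C$ has $G$-class size coprime to $q$ (whence $C$ is the union of $C$-conjugates of $C_C(Q\cap C)$, forcing $Q\cap C\leqslant\ze{C}$), together with a separate treatment of $r$-elements of $C$ outside $M$; or an argument exploiting the $\{p,q\}$-solubility and Hall structure supplied by Proposition~\ref{prop_edges}. As written, the proposal is a correct reduction with the decisive step missing.
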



\section{Proof of main results}

\begin{proof}[Proof of Theorem \ref{teoA}]
First, recall that the class sizes of $G$ are the same that those of $G\times A$, where $A$ is an abelian group. Therefore, we may assume that $G$ has no abelian direct factors, and in particular $\pi(G)=V(G)$.

If $G=A\times B$ where $A$ and $B$ are $\mathcal{D}$-groups of coprime orders, then certainly $\Delta(G)$ is a block square, where $\pi_1$ and $\pi_4$ are respectively the set of prime divisors of the Frobenius kernel and complement of $A/\ze{A}$, and $\pi_2$ and $\pi_3$ are the set of prime divisors of the Frobenius kernel and complement of $B/\ze{B}$.

Therefore, the remainder of the proof is devoted to show that if $\Delta(G)$ is a block square, then $G=A\times B$ where $A$ and $B$ are $\mathcal{D}$-groups of coprime orders. In virtue of Proposition \ref{prop_edges}, we have that $\pi_i$ is a clique of $\Delta(G)$ and there exists an abelian Hall $\pi_i$-subgroup $H_i$ of $G$ for every $i\in\{1,2,3,4\}$. In particular, all the Sylow subgroups of $G$ are abelian.

Since there is no complete vertices in $\Delta(G)$, then by Proposition \ref{reduction} it follows that $G=KL$ where $K\unlhd G$ and $L$ are abelian subgroups of coprime orders, $K=G'$, $K\cap \ze{G}=1$, and both $\pi(K)$ and $\pi(L)$ induce complete subgraphs in $\Delta(G)$. In particular, $V(G)=\pi(K)\cup\pi(L)$.

Without loss of generality, we may assume that there exists a prime $p\in\pi_1\cap \pi(K)$, so $G$ has a normal (abelian) Sylow $p$-subgroup. As there is no edge in $\Delta(G)$ between $\pi_1$ and $\pi_4$, and $\pi(K)$ is a clique of $\Delta(G)$, then $\pi_4\subseteq \pi(L)$. Further, as $\pi(L)$ is also a clique of $\Delta(G)$, then necessarily it holds that $\pi_1\subseteq \pi(K)$, so $H_1=\rad{\pi_1}{G}\leqslant K$. Arguing analogously, we may suppose that $\pi_3\subseteq \pi(K)$ and $\pi_2\subseteq\pi(L)$. It follows that $K=H_1\times H_3$ and, up to conjugation, $L=H_2\times H_4$.

Next we proceed in three steps.

\noindent\textbf{\underline{Step 1:}} For each $s\in \pi_4$, there exists $p\in\pi_1$ such that $[P,S]\neq 1$, where $P\in\syl{p}{H_1}$ and $S\in\syl{s}{H_4}$. Besides, for each $r\in \pi_2$, there exists $q\in\pi_3$ such that $[Q,R]\neq 1$, where $Q\in\syl{q}{H_3}$ and $R\in\syl{r}{H_2}$.

In order to prove the first assertion, and arguing by contradiction, let us  assume that $[S, H_1]=1$. Since $L$ is abelian and $s\in V(G)$, then there necessarily exists $q\in \pi_3$ and $Q\in\syl{q}{H_3}$ such that $[Q,S]\neq 1$. As $Q\unlhd G$, there must exist some $y\in S$ with $q\in\pi(\abs{y^G})$.

Let $r\in\pi_2$ and $R\in\syl{r}{H_2}$. We claim that $[R, H_1]=1$. If not, then $R$ does not centralise some $P\in\syl{p}{H_1}$ for some $p\in \pi_1$. If $r$ does not divide $\abs{x^G}$ for each element $x\in P$, then $x\in \ce{G}{R^{g_x}}$ for some $g_x\in G$, and we may suppose that $g_x\in K$. But then $x=x^{g_x^{-1}}\in \ce{G}{R}$ because $x\in P\leqslant\ze{K}$. Since this is valid for all the elements $x\in P$, we get that $P\leqslant\ce{G}{R}$, a contradiction. Hence we may take an element $x\in P\leqslant H_1$ with $r\in\pi(\abs{x^G})$. As $[S,H_1]=1$, then $qr$ divides $\abs{(xy)^G}$, which is a contradiction because $q\in\pi_3$ and $r\in \pi_2$. 

Since the previous argument holds for each prime $r\in\pi_2$, we deduce that $H_2$ centralises $H_1$. But $H_1\leqslant \ze{K}$, so there exists $t\in \pi_4$ and $T\in\syl{t}{H_4}$ with $[H_1,T]\neq 1$ (in particular $t\neq s$). So we can take a suitable prime $p\in\pi_1$ such that $[P,T]\neq 1$ for $P\in\syl{p}{H_1}$. In particular, $p$ divides $\abs{w^G}$ for some $w\in T$.

Next we claim $\ce{Q}{T}=1$. Let us suppose that there exists a non-trivial element $x\in\ce{Q}{T}$. Certainly $K\leqslant\ce{G}{x}$, and since $K\cap \ze{G}=1$, then there exists a prime $u\in\pi_2\cup\pi_4$ such that $u$ divides $\abs{x^G}$. Recall that $[Q,S]\neq 1$ by the first paragraph, so we can pick an element $z\in Q$ with $s\in\pi(\abs{z^G})$. If $Q$ centralises $T$, then $\abs{(wz)^G}$ is divisible by both $p\in\pi_1$ and $s\in\pi_4$, a contradiction. Thus $Q$ does not centralise $T$, and therefore there exists an element $w_2\in T$ with $q\in\pi(\abs{w_2^G})$. Now we distinguish two cases: if $u\in\pi_2$, then the class size of $xw_2$ is divisible by $u$ and $q\in \pi_3$, a contradiction; if $u\in\pi_4$, then $\{u,p\}\subseteq \pi(\abs{(xw)^G})$, which is also a contradiction. Hence $\ce{Q}{T}=1$.

Recall that $Q$ is an abelian normal Sylow $q$-subgroup of $G$, so it is complemented in $G$. Since $\{p,t\}\notin E(G)$ for every $p\in\pi_1$, then Proposition \ref{key} leads to $P\leqslant \ze{\ce{G}{Q}}$ for $P\in\syl{p}{G}$, and this is valid for every prime $p\in \pi_1$. It follows $\ce{G}{Q}\leqslant\ce{G}{H_1}$.

Note that $\pi_1$ and $\pi_2$ are adjacent in $\Delta(G)$ by hypothesis, so there exist $v_1\in \pi_1$ and $v_2\in\pi_2$ such that $v_1v_2\in\pi(\abs{g^G})$ for some $g\in G$. We can decompose $g=g_kg_l$ in such way that $g_k\in K$, $g_l\in L$ up to conjugation, and $g_kg_l=g_lg_k$. In particular, since additionally $g_k^G$ and $g_l^G$ have coprime sizes, then $\abs{g^G}$ is the product of $\abs{g_k^G}$ and $\abs{ g_l^G}$. Therefore $v_1\in\pi(\abs{g_l^G})$ and $v_2\in\pi(\abs{g_k^G})$. As $\ce{G}{Q}\leqslant\ce{G}{H_1}$ by the previous paragraph, then certainly $g_l\notin \ce{G}{Q}$. This means that $q$ divides the class size of $g_l$, so $\abs{g^G}$ is divisible by both $v_2\in \pi_2$ and $q\in \pi_3$, a contradiction.

The first assertion of Step 1 is already proved. Observe that the second part analogously follows, since the roles of $\pi_1$ and $\pi_4$ are symmetric with respect to $\pi_3$ and $\pi_2$.

\noindent\textbf{\underline{Step 2:}} $H_1$ centralises $H_2$, and $H_3$ centralises $H_4$.

For proving that $[H_1, H_2]=1$, let us suppose that there exist $P\in\syl{p}{H_1}$ and $R\in\syl{r}{H_2}$ such that they do not commute, and we aim to reach a contradiction. Note that we can then take an element $z_1 \in R$ such that $p$ divides $\abs{z_1^G}$.

We claim $\ce{P}{R}=1$. Otherwise, there exists a non-trivial element $x\in \ce{P}{R}$, and since $K\cap \ze{G}=1$, then $\pi(\abs{x^G})$ contains a suitable prime $u\in \pi_2\cup \pi_4$. Using Step 1, there is a prime $q\in \pi_3$ and $Q\in\syl{q}{G}$ such that $Q$ does not centralise $R$. Hence $q$ divides the class size of certain element $z_2\in R$. It follows that the class sizes of $xz_1$ and $xz_2$ are divisible by $pu$ and $qu$, respectively. As $u\in \pi_2\cup \pi_4$, then this contradicts our assumptions. Therefore $\ce{P}{R}=1$.

Since $\{q,r\}\notin E(G)$ for every $q\in \pi_3$, then by Proposition \ref{key} we get that $Q\leqslant\ze{\ce{G}{P}}$, for $Q\in\syl{q}{G}$, and for all primes $q\in\pi_3$. It follows that $\ce{G}{P}\leqslant\ce{G}{H_3}$.

By assumptions, we can take an element $g\in G$ such that $v_3v_4\in \pi(\abs{g^G})$, where $v_3\in \pi_3$ and $v_4\in \pi_4$. This element can be written as a product of two suitable commutative elements $g_k\in K$ and $g_l\in L$. Since $\abs{g^G}$ is the product of the class sizes of $g_k$ and $g_l$, then necessarily we obtain that $v_3\in \pi(\abs{g_l})$ and $v_4\in\pi(\abs{g_k})$. Thus, in virtue of the above paragraph, $g_l$ cannot centralise $P$, and therefore $p\in\pi(\abs{g_l^G})$. It follows that $pv_3v_4$ divides $\abs{g^G}$, which is a contradiction because $p\in \pi_1$.

Hence $[H_1, H_2]=1$, and analogously it can be proved $[H_3,H_4]=1$. These two facts together with Step 1 yield $G=A\times B$, where $A:=H_1H_4$ with $H_1\unlhd A$ and $B:=H_3H_2$ with $H_3\unlhd B$. Note that $A$ and $B$ have coprime orders.

\noindent\textbf{\underline{Step 3:}} $A=H_1H_4$ and $B=H_3H_2$ are $\mathcal{D}$-groups.

We will show that $A=H_1H_4$ is a $\mathcal{D}$-group, and the same arguments are analogously valid for $B$. Note that $H_1$ and $H_4$ are abelian groups of coprime orders. Moreover, $\ze{A}\leqslant H_4$ since the Hall $\pi_1$-subgroup of $\ze{A}$ is contained in $K\cap \ze{G}=1$.

Let $Z:=\ze{A}$. We claim that $A/Z$ is a Frobenius group with Frobenius kernel $H_1Z/Z$. By coprime action, it is enough to prove that $\ce{H_4}{x}\leqslant Z$ for every non-trivial element $x\in H_1$. If this does not hold, then we can take an element $y\in \ce{H_4}{x}\smallsetminus Z$, so $\abs{y^A}$ is divisible by some prime $p\in \pi_1$. Since $x\notin Z$, then its class size in $A$ is divisible by certain prime $s\in \pi_4$. Therefore $ps\in\pi(\abs{(xy)^A})=\pi(\abs{(xy)^G})$, which contradicts our assumptions. 
\end{proof}

\begin{proof}[Proof of Corollary \ref{corB}]
In virtue on Theorem \ref{teoA}, the necessity of the condition is clear. Hence, let us suppose that $\Delta$ is a block square graph where $\pi_i$ is a clique for every $1\leq i \leq 4$, and all the primes in $\pi_1 \cup \pi_4$ are adjacent to all the primes in $\pi_2\cup \pi_3$. We aim to show that there exists a suitable group $G$ such that $\Delta(G)=\Delta$.

Let $m_i$ denote the size of each $\pi_i$, for $i\in\{1,2,3,4\}$. Let $n_1:=p_1p_2\cdots p_{m_1}$ where the $p_j$ are pairwise distinct prime numbers. Let $s_1,s_2,\ldots, s_{m_4}$ be distinct primes such that $n_4:=s_1s_2\cdots s_{m_4}$ is congruent to 1 modulo $n_1$; we point out that they exist by Dirichlet’s theorem on primes in an arithmetic progression. Let $K_1$ and $L_1$ be cyclic groups of orders $n_1$ and $n_4$, respectively. Consider the semidirect product $A=K_1\rtimes L_1$ with respect to a Frobenius action of $L_1$ on $K_1$. Certainly $A$ is a $\mathcal{D}$-group.

Now let $n_3:=q_1q_2\cdots q_{m_3}$ where $q_k\notin\pi(A)$ for each $k\in\{1,\ldots,m_3\}$ and they are pairwise distinct primes. Consider a set $\{r_1,r_2,\ldots, r_{m_2}\}$ of pairwise distinct primes such that none of them lies in $\pi(A)$ and $n_2:=r_1r_2\cdots r_{m_2}$ is congruent to 1 modulo $n_3$; again they exist by the aforementioned theorem due to Dirichlet. Let $K_2$ and $L_2$ be cyclic groups of orders $n_3$ and $n_2$, respectively. Consider the semidirect product $B=K_2\rtimes L_2$ with respect to a Frobenius action of $L_2$ on $K_2$, so $B$ is a $\mathcal{D}$-group. 

Certainly $(|A|, |B|)=1$. Let $G=A\times B$. Hence it easily follows that $\Delta(G)$ is a block square graph where all the vertices in $\pi(K_1)\cup\pi(L_1)$ are adjacent to all the vertices in $\pi(K_2)\cup \pi(L_2)$, and $\pi(K_i), \pi(L_i)$ are cliques for $1\leq i \leq 2$. Thus $\Delta(G)=\Delta$.
\end{proof}


\noindent \textbf{Acknowledgements:} This research has been carried out during a stay of the author at the Dipartimento di Matematica e Informatica ``Ulisse Dini'' (DIMAI) of Università degli Studi di Firenze. He wishes to thank the members of the DIMAI for their hospitality, and the Centro Universitario EDEM for its support.


\end{document}